\newtheorem{thm}{Theorem}[section]
\newtheorem{cor}[thm]{Corollary}
\newtheorem{lem}[thm]{Lemma}
\newtheorem{prop}[thm]{Proposition}
\newtheorem{mainthm}[thm]{Main Theorem}
\theoremstyle{definition}
\newtheorem{defin}[thm]{Definition}
\newtheorem{exa}[thm]{Example}
\newtheorem*{xrem}{Remark}
\numberwithin{equation}{section}
\newcommand{\N}{\mathbb{N}}
\newcommand{\R}{\mathbb{R}}
\newcommand{\fB}{\mathfrak{B}}
\newcommand{\fC}{\mathfrak{C}}
\newcommand{\fS}{\mathfrak{S}}
\newcommand{\fT}{\mathfrak{T}}
\newcommand{\ff}{\mathrm{f}}
\newcommand{\cI}{\mathcal{I}}
\newcommand{\cM}{\mathcal{M}}
\newcommand{\cP}{\mathcal{P}}
\newcommand{\tmu}{\tilde{\mu}}
\newcommand{\tnu}{\tilde{\nu}}
\begin{document}


\baselineskip=17pt


\title{A note on product of measures}

\author{Grzegorz Andrzejczak}
\address{Institute of Mathematics\\ 
Lodz University of Technology, Poland}
\email{grzegorz.andrzejczak@p.lodz.pl}

\date{}


\renewcommand{\thefootnote}{}

\footnote{2010 \emph{Mathematics Subject Classification}: Primary 28A35; Secondary 28A25.}

\footnote{\emph{Key words and phrases}: product of measures, $\sigma-$finite component, Fubini theorem, Tonelli theorem.}

\renewcommand{\thefootnote}{\arabic{footnote}}
\setcounter{footnote}{0}


\begin{abstract}
A slight modification to Halmos' definition of product of measures yields a uniquely characterized associative product. The operation applies to arbitrary (not necessarily $\sigma-$finite) measures and is consistent with the Fubini--Tonelli theorem.
\end{abstract}
\maketitle

\section{Introduction}


In elementary context, it is a generally accepted convention that any reasonable product $\mu\otimes\nu$ of two measures defined in measure spaces, say $(S,\fS)$ and $(T,\fT),$ takes advantage of some $\sigma-$finitness assumptions. One of the most general elementary definitions, proposed by Halmos in {\cite{Halmos}}, assumes that $\fS$ and $\fT$ are $\sigma-$rings, and requires $\mu$ and $\nu$ to be $\sigma-$additive $\sigma-$finite measures on the rings. Keeping in mind that there exist various refined and elaborated generalizatons of \emph{the product of measures} (see e.g. \cite{Fremlin}), I would like to note that the Halmos' approach can be easily and succesfully applied to \emph{arbitrary measures}. While being still important, $\sigma-$finitness is no longer an assumption -- it becomes the border between a computable (numeric) part and a declarative, purely infinite part of any measure.

\section{The product}
\begin{defin} By a \emph{measurable space} we shall mean any pair $(S,\fS)$ composed of a nonempty set $S$ and a $\sigma-$ring $\fS$ of subsets of $S.$ An extended $\sigma-$additive real function $\mu\colon\fS\to[0,\infty]$ will be called a \emph{measure} in the measurable space if $\mu(\emptyset)=0.$ A triple $(S,\fS,\mu)$ is a \emph{measure space} if $\mu$ is a measure in $(S,\fS).$ 
\end{defin}
We recall that a family $\fC$ of sets is a $\sigma-$ring if it is closed under countable unions and if $A\setminus B\in\fC$ whenever $A,B\in\fC.$ 

Any measure $\mu$ in $(S,\fS)$ distinguishes ihe family  of sets of finite measure, $\fS_\mu^\ff=\{A\in\fS;\;\mu(A)<\infty\},$ as well as the $\sigma-$ring $\fS_\mu^\sigma$ of $\sigma-$finite sets. Precisely, $\fS_\mu^\sigma$ consists of all the unions of countable subsets of $\fS_\mu^\ff,$ and is the smallest $\sigma-$ring containing all sets of finite measure.
 
\begin{defin} For an arbitrary measure space $(S,\fS,\mu)$ the restriction of $\mu$ to the $\sigma-$ring $\fS_\mu^\sigma$ will be called the \emph{$\sigma-$finite component} of the measure and will be denoted by $\mu^\sigma.$ The corresponding triple $(S,\fS_\mu^\sigma,\mu^\sigma)$ is the \emph{$\sigma-$finite component} of the measure space.  
\end{defin}
The component is a $\sigma-$finite measure in terminology used by Halmos. In fact, Halmos calls a measure $\sigma-$finite if and only if it its domain is generated by sets of finite measure. 

The pair composed of $\fS$ and $\fS'=\fS_\mu^\sigma$ has an important property
\begin{align}\label{def:simple:ext}
\fS'\subset\fS\quad\text{and}\quad
\forall_{A\in\fS}\forall_{B\in\fS'}\; A\subset B\Longrightarrow A\in\fS'.
\end{align}

\begin{prop}\label{prop:simple:ext} Given any pair $(\fS,\fS')$ of $\sigma-$rings having \emph{the simple extension property} \eqref{def:simple:ext}, for every measure $\mu'\colon\fS'\to[0,\infty]$ the  extension $\mu$ of $\mu'$ such that $\mu(A)=\infty,$ for $A\in\fS\setminus\fS',$ is a measure. 
\end{prop}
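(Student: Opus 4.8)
The plan is to check the two axioms of a measure directly. Since $\emptyset\in\fS'$, the normalization $\mu(\emptyset)=\mu'(\emptyset)=0$ is immediate, so everything reduces to countable additivity. Let $(A_n)_{n\in\N}$ be a pairwise disjoint sequence in $\fS$; as $\fS$ is a $\sigma-$ring the union $A=\bigcup_n A_n$ lies in $\fS$, and the goal is the identity $\mu(A)=\sum_n\mu(A_n)$. I would prove this by splitting on whether or not $A$ belongs to the $\sigma-$ring $\fS'$.

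If $A\in\fS'$, then for each $n$ the set $A_n\in\fS$ satisfies $A_n\subset A\in\fS'$, so the simple extension property \eqref{def:simple:ext} forces $A_n\in\fS'$. On this whole sequence $\mu$ coincides with $\mu'$, and the desired equality is exactly the $\sigma-$additivity of $\mu'$. If instead $A\in\fS\setminus\fS'$, then $\mu(A)=\infty$ by definition, so it suffices to see that the series diverges. Were every $A_n$ in $\fS'$, their countable union $A$ would belong to $\fS'$ because $\fS'$ is a $\sigma-$ring, contradicting the assumption; hence some $A_{n_0}\notin\fS'$ and $\mu(A_{n_0})=\infty$, whence $\sum_n\mu(A_n)=\infty=\mu(A)$.

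There is no genuine obstacle here; the one point deserving attention is how the two hypotheses cooperate. The $\sigma-$ring axioms for $\fS'$ give the upward implication ``all summands in $\fS'$ $\Rightarrow$ union in $\fS'$'', while the simple extension property supplies the downward implication ``union in $\fS'$ $\Rightarrow$ all summands in $\fS'$''. Together they yield the equivalence that $A\in\fS'$ if and only if $A_n\in\fS'$ for every $n$, and it is precisely this equivalence that makes the value $\infty$ propagate consistently, so that the two-valued extension $\mu$ neither creates nor destroys additivity at the boundary between $\fS'$ and $\fS\setminus\fS'$.
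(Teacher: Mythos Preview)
Your argument is correct and follows exactly the route the paper takes: the key observation is the equivalence that for a countable union $C=\bigcup_n C_n$ with $C_n\in\fS$ one has $C\in\fS'$ if and only if every $C_n\in\fS'$, which you derive from the $\sigma$-ring property of $\fS'$ in one direction and from the simple extension property in the other. The paper compresses the entire proof into that single equivalence, whereas you spell out the case split and the normalization $\mu(\emptyset)=0$, but the substance is identical.
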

\begin{proof}  For any equality of the form $C=\bigcup_n C_n,$ where $C_n\in\fS$ as $n\in\N,$  one has $C\in\fS'$ if and only if every summand is in $\fS'.$ 
\end{proof}

For any family of sets $\fC$ we shall denote by $\sigma(\fC)$ the $\sigma-$ring generated by the family, ie. the smallest $\sigma-$ring containing $\fC.$
Obviously, one has $\fS_\mu^\sigma=\sigma(\fS_\mu^\ff).$
An analogous notion of $\sigma-$algebra is relative and depends on \emph{the space} that is a fixed set, say $S,$ such that  $\bigcup\fC\subset S.$ The $\sigma-$ring is a $\sigma-$algebra if and only if containes the space $S.$  
Modifying a classical notation and making it more precise we set
\[ \sigma_S(\fC) := \sigma(\fC\cup\{S\})\]
for the $\sigma-$algebra generated by $\fC.$
Any function $f\colon S\to\R$ is \emph{measurable} with respect to a $\sigma-$ring $\fS$ if $f$ is $\sigma_S(\fS)-$measurable and $\{x;\;f(x)\!\neq\! 0\}\!\in\!\fS.$ Obviously, $\sigma_S(\fS)=\fS\cup\{S\setminus A;\;A\in\fS\}.$

We recall that for any two $\sigma-$rings $\fS$ and $\fT$ the $\sigma-$ring
\[ \fS\otimes\fT = \sigma(\{A\times B;\; A\in\fS,B\in\fT\})
\]
is called the product of the $\sigma-$rings. Clearly, the product is a $\sigma-$algebra if and only if both $\fS$ and $\fT$ are $\sigma-$algebras. By the product $(S,\fS)\times(T,\fT)$ of two measurable spaces we mean the product space $S\times T$ equipped with the product $\sigma-$ring $\fS\otimes\fT.$ The product of measurable spaces is associative.

Let us consider arbitrary  measure spaces $(S,\fS,\mu)$ and $(T,\fT,\nu).$
\begin{thm}[Product of $\sigma-$finite measures, see {\cite{Halmos}}]\label{T:1}
If the $\sigma-$rings $\fS$ and $\fT$ are generated by sets of finite measure then there exists a unique measure $\mu\otimes\nu\colon\fS\otimes\fT\to[0,\infty]$ such that 
\[ (\mu\otimes\nu)(A\times B)=\mu(A)\cdot\nu(B)\quad\text{for }A\in\fS,B\in\fT. \]
\end{thm}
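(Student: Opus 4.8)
The plan is to follow Halmos' section-integration construction, exploiting the hypothesis that $\fS=\fS_\mu^\sigma$ and $\fT=\fT_\nu^\sigma$, so that $S$ and $T$ are exhausted by sets of finite measure. For $E\in\fS\otimes\fT$ and points $s\in S$, $t\in T$, write the sections $E_s=\{t;\,(s,t)\in E\}$ and $E^t=\{s;\,(s,t)\in E\}$. First I would record the elementary fact that $E_s\in\fT$ and $E^t\in\fS$ for every such $E$: the collection of sets with this property contains all rectangles $A\times B$ (whose $s$-section is $B$ when $s\in A$ and $\emptyset$ otherwise), and it is closed under the $\sigma$-ring operations, since $(\bigcup_n E_n)_s=\bigcup_n (E_n)_s$ and $(E\setminus F)_s=E_s\setminus F_s$; hence it contains $\sigma(\{A\times B\})=\fS\otimes\fT$.

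The heart of the argument is to show that $s\mapsto\nu(E_s)$ is $\fS$-measurable, that $t\mapsto\mu(E^t)$ is $\fT$-measurable, and that
\[
\int_S \nu(E_s)\,d\mu(s)=\int_T \mu(E^t)\,d\nu(t).
\]
I would fix a finite-measure rectangle $A_0\times B_0$ (so $\mu(A_0),\nu(B_0)<\infty$) and let $\cM$ be the family of all $E\in\fS\otimes\fT$ for which the two section-functions of $E\cap(A_0\times B_0)$ are measurable and the displayed equality holds with $E$ replaced by $E\cap(A_0\times B_0)$. On such finite rectangles both section-functions are bounded (by $\nu(B_0)$ and $\mu(A_0)$ respectively), so the integrals are finite, differences may legitimately be subtracted, and the monotone convergence theorem applies; one then checks directly that $\cM$ contains every sub-rectangle of $A_0\times B_0$ and is a Dynkin ($\lambda$-) class, while the sub-rectangles form a $\pi$-system generating the trace $\sigma$-algebra. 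The $\pi$--$\lambda$ theorem gives $\cM\supseteq\fS\otimes\fT$. Letting $A_0\times B_0$ range over a countable exhaustion of $S\times T$ by finite-measure rectangles and passing to the limit upgrades the statement to arbitrary $E\in\fS\otimes\fT$.

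With measurability and the iterated-integral identity in hand, I would define $(\mu\otimes\nu)(E)$ to be the common value of the two integrals. That $(\mu\otimes\nu)(\emptyset)=0$ is immediate, and $\sigma$-additivity follows from the Beppo Levi theorem applied to the nonnegative section-functions, using $\nu\big((\bigcup_n E_n)_s\big)=\sum_n\nu((E_n)_s)$ for pairwise disjoint $E_n$. Evaluating on a rectangle gives $\int_S \nu((A\times B)_s)\,d\mu(s)=\int_A \nu(B)\,d\mu(s)=\mu(A)\,\nu(B)$, which is the required normalization.

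Finally, for uniqueness, if $\lambda$ is any measure on $\fS\otimes\fT$ with $\lambda(A\times B)=\mu(A)\,\nu(B)$, then $\lambda$ and $\mu\otimes\nu$ agree on the $\pi$-system of rectangles. On each finite-measure rectangle $A_0\times B_0$ the collection of sets on which the two measures agree is again a $\lambda$-class containing the sub-rectangles, so the $\pi$--$\lambda$ theorem forces agreement there; the countable exhaustion then yields $\lambda=\mu\otimes\nu$ on all of $\fS\otimes\fT$. I expect the main obstacle to be the measurability of $s\mapsto\nu(E_s)$ together with the equality of the two iterated integrals: this is precisely where $\sigma$-finiteness is indispensable, since it allows one to reduce to finite rectangles on which the $\lambda$-class machinery is valid and all integrals are finite.
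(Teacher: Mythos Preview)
The paper does not give its own proof of this theorem; it is quoted as a known result from Halmos, with only the remark that Halmos' proof ``uses the Lebesgue integral'' and that more direct proofs (e.g.\ Doob) exist which first settle the finite case and then patch together the $\sigma$-finite case by uniqueness.  Your section-integration argument is precisely the Halmos approach the remark alludes to, so in that sense you are aligned with what the paper cites rather than diverging from it.

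One small point to tighten: you write that the hypothesis $\fS=\fS_\mu^\sigma$, $\fT=\fT_\nu^\sigma$ means ``$S$ and $T$ are exhausted by sets of finite measure.''  In the $\sigma$-ring setting the whole spaces $S,T$ need not be measurable at all, so the correct statement is that every $E\in\fS\otimes\fT$ is contained in some countable union of finite-measure rectangles (which follows since the family of such $E$ is a $\sigma$-ring containing all rectangles).  Your argument goes through verbatim with this rephrasing; the $\pi$--$\lambda$ step is fine because the trace of a $\sigma$-ring on one of its own members is a genuine $\sigma$-algebra.  Apart from this wording issue the proposal is correct and complete.
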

\begin{xrem} Although the proof presented in \cite{Halmos} uses the Lebesgue integral, there exist more direct proofs (see e.g. \cite{Doob}) which concentrate on the case when both measures are finite. By uniqueness, the construction is then extended to a consistent family of measures on arbitrary products $S'\times T'\subset S\times T$ of $\sigma-$finite measurable sets $S'\in\fS,$ $T'\in\fT.$
\end{xrem}
Without any assumption on the measures we claim what follows.
\begin{cor} There exists a unique measure $\mu\otimes\nu$ in $(S\times T,\fS\otimes\fT)$ such that
\begin{align}\label{wz:mu:nu:sigma} (\mu\otimes\nu)^\sigma = \mu^\sigma\otimes\nu^\sigma, 
\end{align}
ie. whose $\sigma-$finite component is the product of $\sigma-$finite components of $\mu$ and $\nu.$ The \emph{product of measures} $\mu\otimes\nu$ is the only measure in the product of measurable spaces which has the following two properties:
\begin{enumerate}[\upshape (i)]
\item $(\mu\otimes\nu)(A\times B) = \mu(A)\cdot\nu(B)$ for any $A\in\fS_\mu^\ff,$ $B\in\fT_\nu^\ff.$
\item The $\sigma-$ring $(\fS\otimes\fT)_{\mu\otimes\nu}^\sigma$ of all $\sigma-$finite sets in $\fS\otimes\fT$ is generated by the family 
 $\{A\times B;\; A\in\fS_\mu^\ff, B\in\fT_\nu^\ff\}.$ 
\end{enumerate}
\end{cor}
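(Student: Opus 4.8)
The plan is to build $\mu\otimes\nu$ from the $\sigma-$finite case supplied by Theorem~\ref{T:1} and then to extend it by the value $\infty$ via Proposition~\ref{prop:simple:ext}. Write $\fS'=\fS_\mu^\sigma$, $\fT'=\fT_\nu^\sigma$, and let $\cB=\{A\times B;\;A\in\fS_\mu^\ff,\,B\in\fT_\nu^\ff\}$ be the family of finite-measure rectangles. Since $\mu^\sigma$ and $\nu^\sigma$ are $\sigma-$finite measures on $(S,\fS')$ and $(T,\fT')$ (their domains are generated by the sets of finite measure, by the very definition of the $\sigma-$finite component), Theorem~\ref{T:1} furnishes the unique product $\mu^\sigma\otimes\nu^\sigma$ on $\fS'\otimes\fT'$. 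First I would record that $\fS'\otimes\fT'=\sigma(\cB)$: each generating rectangle $A\times B$ with $A=\bigcup_n A_n\in\fS'$, $B=\bigcup_m B_m\in\fT'$ is the countable union $\bigcup_{n,m}(A_n\times B_m)$ of finite-measure rectangles, so the two generated $\sigma-$rings coincide.

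The heart of the argument is to verify the simple extension property \eqref{def:simple:ext} for the pair $(\fS\otimes\fT,\fS'\otimes\fT')$; this is the step I expect to be the main obstacle, since it asserts that a measurable subset of a $\sigma-$finite set is again $\sigma-$finite, which fails for general sub-$\sigma-$rings and must exploit the product structure. Fix $C\in\fS\otimes\fT$ contained in some $D\in\fS'\otimes\fT'=\sigma(\cB)$. Since the sets lying under a countable union of members of $\cB$ form a $\sigma-$ring containing $\cB$, the set $D$ lies under such a union $G=\bigcup_n(A_n\times B_n)$, and it therefore suffices to show that $C\cap G\in\fS'\otimes\fT'$ for every $C\in\fS\otimes\fT$. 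I would prove this by the good-sets principle: the family $\cM=\{C\in\fS\otimes\fT;\;C\cap G\in\fS'\otimes\fT'\}$ is a $\sigma-$ring, because intersecting with $G$ commutes with countable unions and with differences, and it contains every rectangle $A\times B$ with $A\in\fS$, $B\in\fT$, since $(A\times B)\cap G=\bigcup_n\bigl((A\cap A_n)\times(B\cap B_n)\bigr)$ where each factor $A\cap A_n$, $B\cap B_n$ has finite measure. Hence $\cM\supseteq\fS\otimes\fT$, so $\cM=\fS\otimes\fT$, and in particular $C=C\cap G\in\fS'\otimes\fT'$.

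With \eqref{def:simple:ext} in hand, Proposition~\ref{prop:simple:ext} lets me define $\mu\otimes\nu$ on $\fS\otimes\fT$ as the extension of $\mu^\sigma\otimes\nu^\sigma$ that takes the value $\infty$ on $\fS\otimes\fT\setminus\fS'\otimes\fT'$. To identify its $\sigma-$finite component I would observe that every set of finite $\mu\otimes\nu$-measure lies in $\fS'\otimes\fT'$ (the measure being $\infty$ elsewhere) and there agrees with $\mu^\sigma\otimes\nu^\sigma$; consequently $(\fS\otimes\fT)_{\mu\otimes\nu}^\ff=(\fS'\otimes\fT')_{\mu^\sigma\otimes\nu^\sigma}^\ff$, and passing to the generated $\sigma-$rings gives $(\fS\otimes\fT)_{\mu\otimes\nu}^\sigma=\fS'\otimes\fT'$ together with $(\mu\otimes\nu)^\sigma=\mu^\sigma\otimes\nu^\sigma$, which is \eqref{wz:mu:nu:sigma}. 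Property (i) is then immediate on the finite-measure rectangles of $\cB$, while property (ii) is exactly the identity $(\fS\otimes\fT)_{\mu\otimes\nu}^\sigma=\sigma(\cB)$ already established.

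Finally, for uniqueness I would argue that any measure $\lambda$ on $\fS\otimes\fT$ satisfying \eqref{wz:mu:nu:sigma}, or equally satisfying (i) and (ii), must have its $\sigma-$finite part defined on $\fS'\otimes\fT'$ and equal to $\mu^\sigma\otimes\nu^\sigma$ there; since the finite-measure rectangles generate $\fS'\otimes\fT'$, Halmos' uniqueness in Theorem~\ref{T:1} pins $\lambda$ down on $\fS'\otimes\fT'$. Off $\fS'\otimes\fT'$ no set is $\sigma-$finite for $\lambda$, which forces $\lambda=\infty$ there, so $\lambda=\mu\otimes\nu$ throughout. This closes both the existence and the uniqueness claims.
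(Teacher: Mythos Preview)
Your argument is correct and follows the paper's own strategy: establish the simple extension property \eqref{def:simple:ext} for the pair $(\fS\otimes\fT,\fS_\mu^\sigma\otimes\fT_\nu^\sigma)$ and then invoke Proposition~\ref{prop:simple:ext} to extend $\mu^\sigma\otimes\nu^\sigma$ by the value $\infty$. The paper delegates the simple extension step to a separate technical lemma (Lemma~\ref{lem:fSxfT:sigma}), proved via the restriction identity $(\fS\otimes\fT)|_{S'\times T'}=(\fS|_{S'})\otimes(\fT|_{T'})$, whereas you give an equivalent self-contained good-sets argument; beyond that the two proofs coincide.
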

\begin{proof} According to lemma~\ref{lem:fSxfT:sigma}, the pair $(\fS\otimes\fT,\fS_\mu^\sigma\otimes\fT_\nu^\sigma)$ has the simple extension property. Thus the measure $\mu^\sigma\otimes\nu^\sigma$ is uniquely extendible to a measure in $(S\times T,\fS\otimes\fT)$ which has no more sets of finite measure. 
\end{proof}
In view of associativity of the product of $\sigma-$finite measures, equality \eqref{wz:mu:nu:sigma} gives rise to
\begin{cor} The above product of arbitrary measures is associative. \qed
\end{cor}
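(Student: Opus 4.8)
The plan is to reduce associativity for arbitrary measures to the associativity of their $\sigma-$finite components, which Theorem~\ref{T:1} and the remark following it already provide. Fix a third measure space $(U,\fU,\rho).$ Since the product of measurable spaces is associative, the two iterated products $(\mu\otimes\nu)\otimes\rho$ and $\mu\otimes(\nu\otimes\rho)$ are both measures in one and the same measurable space $(S\times T\times U,\ \fS\otimes\fT\otimes\fU),$ each obtained by a further application of the preceding corollary. By construction every such product is the \emph{unique} extension of its $\sigma-$finite component that takes the value $\infty$ on every set outside the component's domain (Proposition~\ref{prop:simple:ext}). Hence two product measures on the same measurable space coincide as soon as their $\sigma-$finite components coincide, and it suffices to compare the components.

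First I would compute the left-hand component. Applying the defining identity \eqref{wz:mu:nu:sigma} to the pair $(\mu\otimes\nu,\rho)$ and then to $(\mu,\nu)$ gives
\[
\bigl((\mu\otimes\nu)\otimes\rho\bigr)^\sigma
= (\mu\otimes\nu)^\sigma\otimes\rho^\sigma
= (\mu^\sigma\otimes\nu^\sigma)\otimes\rho^\sigma,
\]
where on the right every $\otimes$ now denotes the Halmos product of $\sigma-$finite measures of Theorem~\ref{T:1}. The symmetric computation yields
\[
\bigl(\mu\otimes(\nu\otimes\rho)\bigr)^\sigma
= \mu^\sigma\otimes(\nu^\sigma\otimes\rho^\sigma).
\]
Associativity of the product of $\sigma-$finite measures identifies the two right-hand sides, so the two $\sigma-$finite components agree and therefore so do the two iterated products, which is the assertion.

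The step I expect to be the main obstacle is the re-application of \eqref{wz:mu:nu:sigma} in the very first equality above. To write $\bigl((\mu\otimes\nu)\otimes\rho\bigr)^\sigma=(\mu\otimes\nu)^\sigma\otimes\rho^\sigma$ one must know that $(\mu\otimes\nu)^\sigma$ is again a $\sigma-$finite measure in the sense of Halmos, so that the $\sigma-$finite product with $\rho^\sigma$ is legitimate and \eqref{wz:mu:nu:sigma} applies to the outer product; this is exactly \eqref{wz:mu:nu:sigma} for $(\mu,\nu),$ since $\mu^\sigma\otimes\nu^\sigma$ is generated by the finite-measure rectangles, hence $\sigma-$finite. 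One must also check that the underlying domains match under the canonical identification of the iterated product $\sigma-$rings, i.e. that $(\fS\otimes\fT)^\sigma_{\mu\otimes\nu}\otimes\fU_\rho^\sigma$ and $\fS_\mu^\sigma\otimes(\fT_\nu^\sigma\otimes\fU_\rho^\sigma)$ denote the same $\sigma-$ring; this follows because property~(ii) of the corollary identifies $(\fS\otimes\fT)^\sigma_{\mu\otimes\nu}$ with $\fS_\mu^\sigma\otimes\fT_\nu^\sigma,$ after which associativity of the product of $\sigma-$rings does the rest. Once this bookkeeping is in place the displayed equalities carry all the content, and the conclusion is immediate.
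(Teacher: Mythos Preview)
Your argument is correct and follows exactly the route the paper indicates: the paper's proof is the single remark that associativity of the $\sigma$-finite product together with the defining identity~\eqref{wz:mu:nu:sigma} yields the claim, and you have simply unpacked that remark in full, including the domain bookkeeping via property~(ii) and the associativity of the product of $\sigma$-rings.
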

\begin{xrem}
The product measure $\mu\otimes\nu$ can be obtained via the Caratheo\-dory formalism, if one starts with the semi-ring of ''rectangles'' $A\times B$ and the function $A\times B\mapsto \mu(A)\cdot\nu(B),$ for $A$ and $B$ of finite measure -- as in (i). However, if at least one of sets  $A\in\fS,$   $B\in\fT$ is \emph{not $\sigma-$finite} and the other set is nonempty then
$(\mu\otimes\nu)(A\times B)=\infty,$ while the product $\mu(A)\cdot\nu(B)$ equals either $\infty$ or $0$ (according to the \emph{axiom} $\infty\cdot 0=0).$ 
\end{xrem}
\begin{exa} In the Borel measurable space $(\R,\fB_R)$ on the real line
the product of the Lebesgue measure $\ell$ and the counting measure $\delta$ is a Borel measure on $\R^2.$ Any Borel set $B\subset \R^2$ is $\sigma-$finite with respect $\ell\otimes\delta$ if and only if it is of the form $B=\bigcup_{n\in\N} A_n\times\{a_n\},$ where $A_n\in\fB_\R$ and $a_n\in\R$ for $n\in\N.$
\end{exa}
In order to deal with Lebesgue integrals, we propose the following
\begin{defin}\label{def:int1} For an arbitrary measure space $(S,\fS,\mu)$
 by a \emph{Lebesgue integral with respect to the measure} we mean the only non-negative linear functional $\int\!d\mu\colon\cI(S,\mu)\to\R,$ $f\mapsto\int\!f\,d\mu,$ where the linear space $\cI(S,\mu)$ consists of finite $\fS-$measurable real functions on $S$, and
\begin{enumerate}[\upshape (i)]
\item for any set $A\!\in\!\fS,$ one has
$ 1_A\!\in\!\cI(S,\mu)\Longleftrightarrow\mu(A)\!<\!\infty,$
and if $A\!\in\!\fS_\mu^\ff$ then $\int 1_A d\mu=\mu(A);$ 
\item for every non-decreasing sequence $(f_n)_{n\in\N}$ bounded at each point of $S,$ if the sequence $(\int\!f_n\,d\mu)_{n\in\N}$ is bounded then the pointwise limit $f=\lim_{n\to\infty}f_n$ is an element of $\cI(S,\mu),$ and one has
\[\int\!f\,d\mu=\lim_{n\to\infty}\int\!f_n\,d\mu.\]
\item if $f\in\cI(S,\mu)$ then also $|f|,\min(f,1)\in\cI(S,\mu);$
\end{enumerate}
Finite $\fS-$measurable functions which are elements of $\cI(S,\mu)$ are called \emph{$\mu-$integrable}.
\end{defin}
\begin{xrem} Properties (i)--(ii) are well-known to characterize the Lebesgue integral as the linear functional having \emph{the smallest} domain, and condition (iii) assures that the space $\cI(S,\fS)$ of integrable functions is not bigger. 
In fact, property (iii) means that the space of $\mu-$integrable functions is a Stone lattice. Together with the other properties, the Stone condition $\min(f,1)\!\in\!\cI(S,\mu)$ is equivalent to the assertion $\{x;\;f(x)\!\neq\! 0\}\!\in\!\fS_\mu^\sigma$ for $f\in\cI(S,\mu),$ and is superfluous if the measure space is $\sigma-$finite. 
  
\end{xrem}
\begin{cor}\label{cor:int:Leb} The Lebesgue integral with respect to an arbitrary measure $\mu$ equals 
$\int\!d\mu^\sigma$ i.e. integrability as well as the integral depend on the $\sigma-$finite component $\mu^\sigma$ only. \qed
\end{cor}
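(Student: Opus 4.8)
The plan is to exploit the uniqueness built into Definition~\ref{def:int1}. Since $\int\,d\mu$ is \emph{the only} non-negative linear functional, defined on a space of finite $\fS-$measurable functions, that satisfies conditions (i)--(iii), it suffices to exhibit $\int\,d\mu^\sigma$ as a functional of precisely this kind \emph{for the measure $\mu$}. The key observation is that the measure $\mu$ supplies numerical data to (i)--(iii) only through condition (i), and there only through the family $\fS_\mu^\ff$ of sets of finite measure together with the restriction of $\mu$ to it; conditions (ii) and (iii) are purely structural and mention $\mu$ nowhere.

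First I would record the elementary identifications. Every set of finite measure is $\sigma-$finite, so $\fS_\mu^\ff\subset\fS_\mu^\sigma$, and as $\mu^\sigma$ is the restriction of $\mu$ to $\fS_\mu^\sigma$, the two measures have the same sets of finite measure and agree on them. Since also $\fS_\mu^\sigma\subset\fS$ and hence $\sigma_S(\fS_\mu^\sigma)\subset\sigma_S(\fS)$, every finite $\fS_\mu^\sigma-$measurable function is finite $\fS-$measurable, so $\cI(S,\mu^\sigma)$ is a space of the ambient type demanded of $\cI(S,\mu)$. I would then check (i)--(iii) for $\bigl(\cI(S,\mu^\sigma),\int\,d\mu^\sigma\bigr)$ relative to $(S,\fS,\mu)$: conditions (ii) and (iii) are inherited verbatim, since they involve only $S$ and pointwise limits. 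For (i), take $A\in\fS$; if $\mu(A)<\infty$ then $A\in\fS_\mu^\ff\subset\fS_\mu^\sigma$, whence $1_A\in\cI(S,\mu^\sigma)$ and $\int 1_A\,d\mu^\sigma=\mu^\sigma(A)=\mu(A)$, while if $\mu(A)=\infty$ then either $A\in\fS_\mu^\sigma$ with infinite measure or $A\notin\fS_\mu^\sigma$, and in both cases $1_A\notin\cI(S,\mu^\sigma)$.

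With these verifications the uniqueness clause of Definition~\ref{def:int1} forces $\cI(S,\mu^\sigma)=\cI(S,\mu)$ and $\int\,d\mu^\sigma=\int\,d\mu$, which is exactly the assertion that both integrability and the value of the integral depend on $\mu^\sigma$ alone. The one point that needs honest care is the bookkeeping of \emph{domains}: one must be sure that no finite $\fS-$measurable function lying outside $\cI(S,\mu^\sigma)$ can slip into $\cI(S,\mu)$. This is precisely the role of the Stone condition (iii), which by the remark following Definition~\ref{def:int1} is equivalent to $\{x;\,f(x)\neq 0\}\in\fS_\mu^\sigma$; thus every $\mu-$integrable function has $\sigma-$finite support, and the simple extension property \eqref{def:simple:ext} of the pair $(\fS,\fS_\mu^\sigma)$ --- subsets of $\sigma-$finite sets being $\sigma-$finite --- identifies such functions with the $\fS_\mu^\sigma-$measurable ones, so that the two domains genuinely coincide and the uniqueness applies as claimed.
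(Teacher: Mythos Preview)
Your proposal is correct and matches the paper's intended argument. The paper states the corollary with an immediate \qed, relying entirely on the remark preceding it: the Stone condition~(iii) forces every $\mu$-integrable function to have support in $\fS_\mu^\sigma$, and then the uniqueness built into Definition~\ref{def:int1} does the rest --- which is precisely the route you spell out in detail.
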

Given any measurable space $(S,\fS),$ let $\cM^+(S,\fS)$ stand for the cone of nonnegative extended real-valued $\fS-$measurable functions on $S.$
\begin{defin}\label{def:int2} By an \emph{extended Lebesgue integral} in an arbitrary measure space $(S,\fS,\mu)$ we mean the only non-decreasing function $\int\!d\mu\colon \cM^+(S,\fS)\to[0,\infty],$ equal to the integral $\int\!d\mu\colon\cI(T,\mu)\to\R$ on non-negative finite integrable functions, and such that
\begin{enumerate}[\upshape (i)]
\item for any set $A\!\in\!\fS,$ one has
 $\int 1_A d\mu=\mu(A);$ 
\item for every non-decreasing sequence $(f_n)_{n\in\N}$ in $\cM^+(S,\fS),$ the following equality
\[\int\!\lim_{n\to\infty}f_n\,d\mu=\lim_{n\to\infty}\int\!f_n\,d\mu\]
holds true.
\end{enumerate}
An extended real-valued $\fS-$measurable function $f$ on $S$ is called \emph{$\mu-$inte\-grable} if $\int\!|f|\,d\mu<\infty.$ 
\end{defin}
As a complement to corollary~\ref{cor:int:Leb} we get
\begin{cor}\label{cor:int2:Leb} For any $f\in\cM^+(S,\fS)$ the function is integrable if and only if there exists a finite integrable $g\in\cI(T,\mu)$ such that $f=g$ \emph{almost everywhere}, i.e. $\mu(\{x;\; f(x)\neq g(x)\})=0.$  

Every $\mu-$integrable function is $\fS_\mu^\sigma-$measurable  and $\mu^\sigma-$integrable. \qed
\end{cor}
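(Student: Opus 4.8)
The plan is to isolate a single auxiliary fact — that the extended integral of Definition~\ref{def:int2} is insensitive to alterations on a $\mu$-null set — and then read off both assertions from it together with the results already in hand. First I would record that for any null set $N\in\fS$ one has $\int\infty\cdot 1_N\,d\mu=0$: this is immediate from property (ii) applied to the non-decreasing sequence $n\cdot 1_N$, whose integrals $n\mu(N)$ all vanish. Combining this with monotonicity and with the additivity of $\int d\mu$ on $\cM^+(S,\fS)$ (which itself follows from linearity of $\int d\mu$ on $\cI(S,\mu)$ together with (ii)), I obtain the invariance lemma: if $f,h\in\cM^+(S,\fS)$ agree off a null set $N$, then from $f\le h+\infty\cdot 1_N$ and the symmetric inequality one gets $\int f\,d\mu=\int h\,d\mu$.

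For the reverse implication of the equivalence, suppose $f=g$ almost everywhere with $g\in\cI(S,\mu)$. Since $f\ge 0$ this forces $f=|g|$ almost everywhere, and $|g|\in\cI(S,\mu)$ by property (iii); the invariance lemma then gives $\int f\,d\mu=\int|g|\,d\mu<\infty$, so $f$ is integrable. For the forward implication I assume $\int f\,d\mu<\infty$. The Chebyshev-type bound $n\,\mu(\{f\ge n\})\le\int f\,d\mu$ — where each $\{f\ge n\}$ lies in $\fS$ by the simple extension property~\eqref{def:simple:ext}, being a $\sigma_S(\fS)$-measurable subset of $\{f\ne 0\}\in\fS$ — shows $\mu(\{f=\infty\})=0$, so $f$ is finite almost everywhere. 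I then define $g$ to equal $f$ where $f<\infty$ and $0$ elsewhere, a finite $\fS$-measurable function with $f=g$ a.e. To place $g$ in the small space $\cI(S,\mu)$, I set $g_n=\min(g,n)\cdot 1_{\{g\ge 1/n\}}$. Each $\{g\ge 1/n\}$ has finite measure (Chebyshev again), so $g_n$ is bounded with support of finite measure, hence approximable by simple functions and so in $\cI(S,\mu)$; the sequence is non-decreasing with $\int g_n\,d\mu\le\int g\,d\mu<\infty$, and property (ii) delivers $g=\lim_n g_n\in\cI(S,\mu)$.

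For the concluding sentence, let $f$ be $\mu$-integrable, so $|f|\in\cM^+(S,\fS)$ with $\int|f|\,d\mu<\infty$. By the equivalence just proved there is $g\in\cI(S,\mu)$ with $|f|=g$ almost everywhere, and the remark following Definition~\ref{def:int1} gives $\{g\ne 0\}\in\fS_\mu^\sigma$. As the symmetric difference $\{|f|\ne 0\}\triangle\{g\ne 0\}$ is contained in the null set $\{|f|\ne g\}\in\fS_\mu^\ff\subset\fS_\mu^\sigma$, and $\{f\ne 0\}=\{|f|\ne 0\}\in\fS$ by measurability, the simple extension property~\eqref{def:simple:ext} forces $\{f\ne 0\}\in\fS_\mu^\sigma$; running the same containment argument on preimages $f^{-1}(B)$ with $0\notin B$ upgrades this to $\fS_\mu^\sigma$-measurability of $f$. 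The $\mu^\sigma$-integrability is then precisely corollary~\ref{cor:int:Leb}, which identifies $\int d\mu$ with $\int d\mu^\sigma$.

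The main obstacle I anticipate is the invariance lemma of the first paragraph. Because $\int d\mu$ on $\cM^+(S,\fS)$ is pinned down only abstractly, as the unique non-decreasing extension meeting (i)--(ii) rather than by an explicit supremum formula, its additivity and its vanishing on null sets must be extracted from those two axioms before everything else follows routinely; once that is secured, the two stated claims reduce to bookkeeping with the simple extension property and the already-established dependence of integration on the $\sigma$-finite component.
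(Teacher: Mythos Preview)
Your argument is correct and considerably more explicit than the paper's own treatment, which is simply a bare \qed with no accompanying proof; the author evidently regards the corollary as an immediate consequence of Definitions~\ref{def:int1}--\ref{def:int2}, the remark following Definition~\ref{def:int1}, and Corollary~\ref{cor:int:Leb}. Your route through an invariance-on-null-sets lemma, a Chebyshev truncation to manufacture a finite $g\in\cI(S,\mu)$, and simple-extension bookkeeping for $\fS_\mu^\sigma$-measurability is exactly how one would unpack what the paper leaves implicit.

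One refinement on the obstacle you yourself isolate: additivity of the extended integral on all of $\cM^+(S,\fS)$ does not in fact follow from linearity on $\cI(S,\mu)$ together with property~(ii) of Definition~\ref{def:int2}, because a function whose support fails to lie in $\fS_\mu^\sigma$ is not an increasing limit of integrable functions. Indeed the listed conditions (non-decreasing, agreement on $\cI(S,\mu)^+$, (i), (ii)) do not by themselves determine $\int c\,1_A\,d\mu$ when $0<c<1$ and $A\in\fS\setminus\fS_\mu^\sigma$: for instance on a one-point space with $\mu(\{pt\})=\infty$, the map sending $f$ to $0$ if $f(pt)\le 1/2$ and to $\infty$ otherwise satisfies them all. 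The intended reading is that the uniqueness clause in Definition~\ref{def:int2} identifies the extended integral with the standard construction, so additivity and homogeneity come for free; equivalently, one checks directly that the map sending $f$ to $\int f\,d\mu^\sigma$ when $\{f\neq 0\}\in\fS_\mu^\sigma$ and to $\infty$ otherwise satisfies all the conditions, and then works with that explicit formula. Either way your proof goes through unchanged.
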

Classical expositions of the Lebesgue integral in $(S,\fS,\mu)$ take advantage of a Daniell--Stone formalism and start from an extension of the assignment $1_A\mapsto \mu(A),$ for $A\in\fS_\mu^\ff,$ to a unique linear functional $\tmu\colon\cP_\mu\to\R$ \emph{associated with the measure}. The domain $\cP_\mu$ is (algebraically) generated by the characteristic functions, and consists of -- so called -- simple functions. The respective Daniell--Stone integral $\int\!d\tmu$ is well-known to be a completion of the Lebesgue integral  
$\int\!d\mu.$

In the case of two arbitrary measure spaces, $(S,\fS,\mu)$ and $(T,\fT,\nu),$ the tensor product $\cP_\mu\!\otimes\!\cP_\nu$ is naturally isomorphic to a linear subspace of ${\fS\!\otimes\!\fT}-$measurable functions on $S\times T,$ and the respective associated functionals yield a functional $\tmu\otimes\tnu\colon \cP_\mu\otimes\cP_\nu \to \R$ such that
\[(\tmu\otimes\tnu)(f)=\tmu\big(s\mapsto\tnu(f(s,\cdot))\big)= \tnu\big(t\mapsto\tmu(f(\cdot,t))\big),
\]
for any $f\in\cP_\mu\otimes\cP_\nu.$  
Turning back to the examined product of measures, 
we are now about to formulate and prove
\begin{mainthm} Let $(S,\fS,\mu)$ and $(T,\fT,\nu)$ be any measure spaces.
\begin{enumerate}[\upshape (i)]
\item The Lebesgue integral $\int\!d(\mu\otimes\nu)$ is equal to the Daniell--Stone integral $\int\!d(\tmu\otimes\tnu)$ -- restricted to $\fS\otimes\fT-$measurable functions.
\item  \emph{Fubini:} For any $\mu\otimes\nu-$integrable function $f\colon S\!\times\! T\!\to\![-\infty,\infty],$ one~has  
\begin{align}\label{wz:int:fubini}
 \int\! f\, d(\mu\otimes\nu) = \int\!\left(s\mapsto\int\! f(s,\cdot)\,d\nu\right)d\mu \\ \notag
= \int\!\left(t\mapsto\int\! f(\cdot,t)\,d\mu\right)d\nu,
\end{align}
where the integrands on the right are integrable for almost every $s$ and $t,$ respectively.
\item \emph{Tonelli:} Equalities \eqref{wz:int:fubini} remain valid if $f\in\cM^+(S\times T,\fS\otimes\fT)$ is $\fS_\mu^\sigma\otimes\fT_\nu^\sigma-$measurable, i.e. such that the set $\{x;\;f(x)\neq 0\}$ is $\sigma-$finite.  Finite value of any of the three sides of \eqref{wz:int:fubini} assures then $(\mu\otimes\nu)$--integrabi\-lity of $f.$
\end{enumerate}
\end{mainthm}
\begin{proof} (i) The space $\cP_\mu\otimes\cP_\nu$ is a Stone lattice, so the Stone theorem (see e.g. \cite{Stroock}) assures that the Daniell--Stone integral $\int\!d(\tmu\otimes\tnu)$ is equal to the Lebesgue integral with respect to a measure, say $\lambda,$ and the corresponding $\sigma-$finite sets form the $\sigma-$ring $\fS_\mu^\sigma\otimes\fT_\nu^\sigma.$ Since the product $\mu^\sigma\otimes\nu^\sigma$ and the $\sigma-$finite component $\lambda^\sigma$
of $\lambda$ are both defined on the same $\sigma-$ring and are equal on the $\pi-$system $\{A\times B;\; A\in \fS_\mu^\ff, B\in \fT_\nu^\ff\},$ they are equal -- and so $\lambda=\mu\otimes\nu.$ 

Assertions (ii)--(iii) follow from corollaries~\ref{cor:int:Leb}--\ref{cor:int2:Leb} and the classical $\sigma$--finite variant of the Fubini--Tonelli theorem.
\end{proof}
\section{Technical lemmas}
For any family of sets $\fC$ and a set $S$ let $\fC|S:=\{A\cap S;\;A\in\fC\}$ denote a \emph{restriction} of the family (to $S$). Any restriction of a $\sigma-$ring remains a $\sigma-$ring. Basic properties of the operation are recalled in 
\begin{lem}\label{lem:restrict} \begin{enumerate}[\upshape (i)] 
\item $\sigma(\fC|S)=\sigma(\fC)|S.$
\item For any $n\in\N$ and sets $S_1,\ldots,S_n,$ \[\bigotimes_{i\leq n}(\fS_i|S_i)=\big(\bigotimes\fS_i\big)|{S_1\times\cdots\times S_n}\]
whenever $\fS_i,i\leq n,$ are arbitrary $\sigma-$rings. \qed 
\end{enumerate}
\end{lem}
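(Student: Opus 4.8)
The plan is to prove (i) by the standard ``good-sets'' technique and then deduce (ii) as a direct corollary, by checking that restriction commutes with the formation of rectangles.

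For (i), the easy inclusion is $\sigma(\fC|S)\subseteq\sigma(\fC)|S$: since the restriction of a $\sigma-$ring is again a $\sigma-$ring and $\fC|S\subseteq\sigma(\fC)|S$, minimality of $\sigma(\fC|S)$ gives the containment. The reverse inclusion $\sigma(\fC)|S\subseteq\sigma(\fC|S)$ is where I expect the only real work. I would introduce the family
\[ \cD=\{A\in\sigma(\fC)\;;\;A\cap S\in\sigma(\fC|S)\}\]
and verify that $\cD$ is a $\sigma-$ring containing $\fC$. It contains $\fC$ because $A\in\fC$ gives $A\cap S\in\fC|S\subseteq\sigma(\fC|S)$. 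It is closed under the $\sigma-$ring operations because $(A\setminus B)\cap S=(A\cap S)\setminus(B\cap S)$ and $(\bigcup_n A_n)\cap S=\bigcup_n(A_n\cap S)$, so that membership in $\cD$ is inherited from membership of the traces in the $\sigma-$ring $\sigma(\fC|S)$. Minimality of $\sigma(\fC)$ then forces $\sigma(\fC)\subseteq\cD$, i.e. $A\cap S\in\sigma(\fC|S)$ for every $A\in\sigma(\fC)$, which is exactly the required inclusion.

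For (ii), put $\fC=\{A_1\times\cdots\times A_n\;;\;A_i\in\fS_i\}$ and $S=S_1\times\cdots\times S_n$, so that $\bigotimes_{i\leq n}\fS_i=\sigma(\fC)$. Part (i) gives $(\bigotimes_{i\leq n}\fS_i)|S=\sigma(\fC)|S=\sigma(\fC|S)$. The point is then the set-theoretic identity $(A_1\times\cdots\times A_n)\cap(S_1\times\cdots\times S_n)=(A_1\cap S_1)\times\cdots\times(A_n\cap S_n)$, which shows that $\fC|S=\{B_1\times\cdots\times B_n\;;\;B_i\in\fS_i|S_i\}$ is precisely the generating family of $\bigotimes_{i\leq n}(\fS_i|S_i)$. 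Hence $\sigma(\fC|S)=\bigotimes_{i\leq n}(\fS_i|S_i)$ and the two sides coincide. No induction is needed, since the rectangle identity handles all $n$ coordinates at once.

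The only genuinely nontrivial step is the reverse inclusion in (i); once that good-sets argument is in place, (ii) reduces to the bookkeeping computation above.
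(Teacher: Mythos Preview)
Your argument is correct. The good-sets family $\cD$ is indeed a $\sigma$-ring containing $\fC$, which settles the nontrivial inclusion in (i), and the rectangle identity $(A_1\times\cdots\times A_n)\cap(S_1\times\cdots\times S_n)=(A_1\cap S_1)\times\cdots\times(A_n\cap S_n)$ reduces (ii) to (i) exactly as you describe.

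As for comparison with the paper: there is nothing to compare. The paper states this lemma as a recalled basic property and marks it with a \qed, offering no proof at all. You have supplied precisely the standard argument that the author presumably had in mind but chose to omit.
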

\begin{lem}\label{lem:fSxfT:sigma}
Let $(S_i,\fS_i,\mu_i),i\leq n,$ be any finite sequence of measure spaces. Then one has
\begin{align}\label{wz:mu:SxT:sigma}
  \bigotimes_{i\leq n}\fS_i^\sigma = \sigma\{A_1\!\times\!\cdots\!\times\! A_n;\; \forall_{i\leq n}\,\mu_i(A_i)\!<\!\infty \},
\end{align}
where $\fS^\sigma_i:=(\fS_i)_{\mu_i}^\sigma$ for $i\leq n.$ Furthermore, the  $\sigma-$ring \eqref{wz:mu:SxT:sigma} is composed of all the measurable sets $C\in\bigotimes_{i\leq n}\fS_i$ such that 
\begin{align}\label{wz:mu:S_i:sigma}
C\subset S'_1\times\!\cdots\!\times S'_n,\quad \text{for some }
S'_i\in\fS_i^\sigma,\, i=1,\ldots, n.
\end{align}
\end{lem}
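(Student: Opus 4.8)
The plan is to handle the two assertions separately. Write $\fR$ for the $\sigma-$ring on the right of \eqref{wz:mu:SxT:sigma} (generated by the rectangles with sides of finite measure), put $\fD:=\bigotimes_{i\leq n}\fS_i^\sigma$, and let $\fF$ denote the family of all $C\in\bigotimes_{i\leq n}\fS_i$ satisfying \eqref{wz:mu:S_i:sigma}.

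For the first equality I would argue $\fR=\fD$ by two inclusions. The inclusion $\fR\subset\fD$ is immediate, since every set of finite measure is $\sigma-$finite, so each generating rectangle of $\fR$ already generates $\fD$. Conversely, given a generating rectangle $A_1\times\cdots\times A_n$ of $\fD$ with $A_i\in\fS_i^\sigma$, I would write each side $A_i=\bigcup_k A_i^{(k)}$ as a countable union of sets of finite $\mu_i-$measure and distribute, expressing the rectangle as the countable union $\bigcup_{k_1,\ldots,k_n}A_1^{(k_1)}\times\cdots\times A_n^{(k_n)}$ of finite-measure rectangles; hence it lies in $\fR$ and $\fD\subset\fR$.

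For the characterization \eqref{wz:mu:S_i:sigma} I would first verify $\fD\subset\fF$ by checking that $\fF$ is a $\sigma-$ring containing the generators of $\fD$. Closure under differences is clear, since $C\setminus C'\subset C$ inherits a bounding product; for countable unions the point is that if $C_m\subset S_1^{(m)}\times\cdots\times S_n^{(m)}$ with $S_i^{(m)}\in\fS_i^\sigma$, then $S_i':=\bigcup_m S_i^{(m)}\in\fS_i^\sigma$ and $\bigcup_m C_m\subset S_1'\times\cdots\times S_n'$. Every generating rectangle of $\fD$ is visibly in $\fF$, so $\fD\subset\fF$.

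The main step is the reverse inclusion $\fF\subset\fD$. Given $C\in\fF$, fix a bounding product $S_1'\times\cdots\times S_n'$ with $S_i'\in\fS_i^\sigma$. The key observation is that restriction to the $\sigma-$finite set $S_i'$ collapses $\fS_i$ onto $\fS_i^\sigma$: by the simple extension property \eqref{def:simple:ext}, any $A\cap S_i'$ with $A\in\fS_i$ is a subset of $S_i'\in\fS_i^\sigma$ and hence lies in $\fS_i^\sigma$, so that $\fS_i|S_i'=\fS_i^\sigma|S_i'$. Applying Lemma~\ref{lem:restrict}(ii) to both sides then gives
\[ \Big(\bigotimes_{i\leq n}\fS_i\Big)\Big|(S_1'\times\cdots\times S_n') = \bigotimes_{i\leq n}(\fS_i|S_i') = \bigotimes_{i\leq n}(\fS_i^\sigma|S_i') = \Big(\bigotimes_{i\leq n}\fS_i^\sigma\Big)\Big|(S_1'\times\cdots\times S_n'). \]
Since $C\subset S_1'\times\cdots\times S_n'$ and $C\in\bigotimes_{i\leq n}\fS_i$, the left-hand family contains $C$; thus $C$ belongs to the restriction of $\fD$ to the rectangle $S_1'\times\cdots\times S_n'$, and since that rectangle is itself a member of $\fD$ its restriction is contained in $\fD$. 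Hence $C\in\fD$. I expect this localization — turning an \emph{a priori} complicated measurable subset of a $\sigma-$finite box into a member of $\bigotimes_{i\leq n}\fS_i^\sigma$ by means of Lemma~\ref{lem:restrict} and \eqref{def:simple:ext} — to be the crux of the whole argument.
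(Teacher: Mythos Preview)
Your proof is correct and follows essentially the same route as the paper's. Both arguments dispatch \eqref{wz:mu:SxT:sigma} by the obvious decomposition of $\sigma$-finite sides into finite-measure pieces, verify that the family $\fF$ is a $\sigma$-ring containing the generators of $\fD$, and then prove the reverse inclusion $\fF\subset\fD$ by the same localization via Lemma~\ref{lem:restrict}(ii): restrict $\bigotimes_i\fS_i$ to the bounding box $S_1'\times\cdots\times S_n'$, use that $\fS_i|S_i'\subset\fS_i^\sigma$ (the paper states this inclusion directly, you derive the equality $\fS_i|S_i'=\fS_i^\sigma|S_i'$ from \eqref{def:simple:ext}, which is a cosmetic variation), and conclude $C\in\bigotimes_i\fS_i^\sigma$.
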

\begin{proof} Equality \eqref{wz:mu:SxT:sigma} is a simple consequence of the notion of $\sigma-$finitness.  
The family of sets $C$ satisfying (\ref{wz:mu:S_i:sigma}) is a $\sigma-$ring, and thus contains the product $\bigotimes_{i\leq n}\fS_i^\sigma.$ In order to prove the reverse inclusion we consider any $\sigma-$finite sets $S'_i\in\fS_i^\sigma, i\leq n,$ and an arbitrary $\bigoplus_{i\leq n}\fS_i-$measurable subset $C\subset S'_1\!\times\!\cdots\!\times\! S'_n.$  Such a $C$ is an element of a $\sigma-$ring
\[\begin{array} {rl}
(\bigotimes_{i\leq n}\fS_i)|{S'_1\!\times\!\cdots\!\times\! S'_n} &= \bigotimes_{i\leq n}(\fS_i|{S'_i})\\
 &=\sigma\{A_1\!\times\!\cdots\!\times\! A_n;\;\forall_{i\leq n}\,A_i\in\fS_i|{S'_i}\}\\
&\subset \sigma\{A_1\!\times\!\cdots\!\times\! A_n;\;\forall_{i\leq n}\,A_i\in\fS_i^\sigma\}=\bigotimes_{i\leq n}\fS_i^\sigma. 
\end{array} \]
\end{proof}



\begin{thebibliography}{HD}




\normalsize
\baselineskip=17pt


\bibitem[D] {Doob} J.L. Doob, \emph{Measure Theory}, Springer, Berlin, 1993.
\bibitem[H] {Halmos} P.R. Halmos, \emph{Measure Theory}, Springer, Berlin, 1974.
\bibitem[F] {Fremlin} D.H. Fremlin, \emph{Measure Theory}, Vol. 2, Torres Fremlin, Colchester, 2001.
\bibitem[S] {Stroock} D.W. Stroock, \emph{Essentials of Integration Theory for Analysis}, Springer, New York, 2011.
\end{thebibliography}
\end{document}